\definecolor{c1}{rgb}{0,0,1} % blue
\definecolor{c2}{rgb}{0,0.3,0.9} % light blue
\definecolor{c3}{rgb}{0.3,0,0.7} % red blue
\newcommand{\R}{\mathbb{R}}
\newcommand{\N}{\mathbb{N}}
\newtheorem{theorem}{Theorem}[section]
\newtheorem{lemma}[theorem]{Lemma}
\theoremstyle{definition}
\newtheorem{definition}[theorem]{Definition}
\newtheorem{remark}[theorem]{Remark}
\numberwithin{equation}{section}
\DeclareMathOperator{\meas}{meas}
\newcommand{\e}{\varepsilon}
\begin{document}

%% Title, authors and addresses

%% use the tnoteref command within \title for footnotes;
%% use the tnotetext command for theassociated footnote;
%% use the fnref command within \author or \address for footnotes;
%% use the fntext command for theassociated footnote;
%% use the corref command within \author for corresponding author footnotes;
%% use the cortext command for theassociated footnote;
%% use the ead command for the email address,
%% and the form \ead[url] for the home page:
%% \title{Title\tnoteref{label1}}
%% \tnotetext[label1]{}
%% \author{Name\corref{cor1}\fnref{label2}}
%% \ead{email address}
%% \ead[url]{home page}
%% \fntext[label2]{}
%% \cortext[cor1]{}
%% \address{Address\fnref{label3}}
%% \fntext[label3]{}
\title{Existence of infinitely many solutions for a class of fractional Schr\"{o}dinger equations in $\R^N$ with combined nonlinearities}
%\footnote{This work was supported by Natural Science Foundation of China (NSFC 11671403) and the Mathematics Interdisciplinary Science project of CSU.}
\date{}
\date{}
\author{Sofiane Khoutir\thanks{{E-mail: skhoutir@usthb.dz}}\\
{\small Faculty of Mathematics, Laboratory AMNEDP, University of Science and Technology Houari Boumediene,}\\
{\small PB 32 El-Alia, Bab Ezzouar 16111, Algiers, Algeria}}
%{\small $^b$Laboratory AMNEDP, Faculty of Mathematics, University of Sciences and Technology,}\\
%{\small Po Box 32 El Alia, Algiers, Algeria}}
\maketitle
 \begin{center}
 \begin{minipage}{13cm}
 \par
   \small  {\bf Abstract:} This paper is devoted to the following class of nonlinear fractional Schr\"{o}dinger equations:
\begin{equation*}
(-\Delta)^{s}u+V(x)u=f(x,u)+\lambda g(x,u), \quad \text{in}\: \R^N,
\end{equation*}
where $s\in (0,1)$, $\ N>2s$, $(-\Delta)^{s}$ stands for the fractional Laplacian, $\lambda\in \R$ is a parameter, $V\in C(\R^N,R)$, $f(x,u)$ is superlinear and $g(x,u)$ is sublinear with respect to $u$, respectively. We prove the existence of infinitely many high energy solutions of the aforementioned equation by means of the Fountain theorem. Some recent results are extended and sharply improved.
 \vskip2mm
 \par
  {\bf Keywords:} Fractional Schr\"{o}dinger equation; Fountain theorem; infinitely many solutions. 
 \vskip2mm
 \par
  {\bf 2010 Mathematics Subject Classification.} 35R11; 35J20; 35J60.
\end{minipage}
\end{center}

 \vskip6mm
%% main text
\vskip2mm
 \par
%%%%%%%%%%%%%%%%%%%%%%%%%%%%%%%%%%%%%%%%%%%%%%%%%%%%%%%%%%   SECTION 1    %%%%%%%%%%%%%%%%%%%%%%%%%%%%%%%%%%%%%%%%%%%%%%%%%%%%%%%%%%%%%%%%
\section{Introduction}
Consider the following fractional Schr\"{o}dinger equation
\begin{equation}\label{Schrod-frac}
(-\Delta )^{s}u+ V(x)u=f(x,u), \quad x \in \R^N,
\end{equation}
where $s \in (0,1)$, $N>2s$ and $(-\Delta )^{s}$ stands for the fractional Laplacian which can be defined for a sufficiently smooth function $u$ as
\begin{equation}\label{frac-Lap}
(-\Delta )^{s}u (x)=C(N,s)\lim\limits_{\e \rightarrow 0^+}\int_{\R^N\setminus B(x,\e)}\frac{u (x)-u (y)}{| x-y| ^{N+2s}}dy,\quad x\in \R^N,
\end{equation}
where $B(x,\e)=\{x\in \R^N\: :\: |x|< \e\}$ and $C(N,s)>0$ is a dimensional constant that depends on $N$ and $s$ (see \cite{29}). 

The Equation \eqref{1} arises in the study of the following fractional Schr\"{o}dinger equation
\begin{equation*}
i\partial_t \Psi(x,t)+(-\Delta )^{s}\Psi (x,t)+(V(x)-\omega)\Psi (x,t)=h(|\Psi (x,t)|)\Psi (x,t),
\end{equation*}
when looking for standing waves, that is, solutions of the form $\Psi (x,t)=\exp(-i\omega t)u(x)$.
The fractional Schr\"{o}dinger equation was introduced by Laskin \cite{Laskin1,Laskin2} in the context of fractional quantum mechanics, as a result of extending the Feynman path integral from the Brownian-like to L\'{e}vy-like quantum mechanical paths. It is also appeared in several subjects such as plasma physics, image processing, finance and stochastic models, see for instance \cite{finance,plasma,image,Laskin3}.

In recent years, the equation \eqref{Schrod-frac} has been extensively studied under various assumptions on $V$ and $f$ and there are many interesting results in the literature on the existence and multiplicity of solutions to problem (\ref{1}) has been obtained via variational approaches, we refer the readers to \cite{secchi1,19,18,felmer2,17,hou,khoutir1,khoutir2,24,zupei,teng,mohsen,20,26}. 
In particular, the existence of infinitely many high or small energy solutions to problem \eqref{Schrod-frac} was established in \cite{19,18,17,hou,khoutir1,khoutir2,24,teng} by the aid of variant fountain theorems (see \cite{zou}) or the symmetric mountain pass theorem (see \cite{Willem}). However, there are few papers concern with the existence of infinitely many (high or small) energy solutions to problem \eqref{Schrod-frac} in the case where $f(x,u)$ is a combination of sublinear and superlinear terms at infinity with respect to $u$, see for instance \cite{18,24,mohsen}.

In \cite{18}, Du and Tian considered the following class of fractional Schr\"{o}dinger equation with concave and critical nonlinearities
\begin{equation}\label{du}
(-\Delta)^{s} u+V(x) u=\mu a(x)|u|^{q-2} u+|u|^{2_{s}^{*}-2} u, \quad x \in \mathbb{R}^{N},
\end{equation}
where (and in the sequel) $2_s^*=\frac{2N}{N-2s}$ is the critical Sobolev exponent, $\mu>0$ is a parameter, $1<q<2$, $a(x)$ is positive continuous functions satisfying $a(x) \in L^{\frac{2}{2-q}}\left(\mathbb{R}^{N}\right) \cap L^{\frac{2_{s}^{2}}{2 s}-q}\left(\mathbb{R}^{N}\right)$ and $V(x)$ satisfies the following assumptions
\begin{itemize}
\item[$(V)$] $V \in C(\R^N,\R)$ satisfies $\inf_{x\in \mathbb{R}^{N}}  V(x)\geq V_{0}>0$, where $V_{0}$ is a constant. Moreover, there exists $r_0>0$ such that
\begin{equation*}
\lim_{|y|\rightarrow \infty} \meas\{x \in \mathbb{R}^{N} \: : \:|x-y|\leq r_0,\: V(x)\leq M\}=0,\quad \forall M>0,
\end{equation*}
\end{itemize}
where $\meas(.)$ is the Lebesgue measure on $\R^N$. The authors proved that there exists $\mu^*> 0$ such that, for any $0 < \mu < \mu^*$, problem \eqref{du} possesses
infinitely many small energy solutions by using the Dual fountain theorem.

In \cite{mohsen}, Timoumi established infinitely many small energy solutions to the problem
\begin{equation*} 
(-\Delta )^{s}u+ V(x)u=g(x,u)+h(x,u), \quad \text{in}\: \R^N, 
\end{equation*}
by means of the Dual Fountain Theorem (see \cite{Willem}), where $V(x)$ satisfies assumptions $(V)$, $g(x,u)$ is sublinear in $u$ and $h(x,u)$ is superlinear in $u$.

Li and Shang \cite{24} studied the following problem
\begin{equation}\label{li}
(-\Delta)^{s} u+V(x) u=f(x, u)+\lambda h(x)|u|^{p-2} u,\quad x \in R^{N}
\end{equation}
where  $\lambda>0$ is a parameter, $p\in [1,2)$, $h \in L^{\frac{2}{2-p}}\left(R^{N}\right)$ and $V$ and $f$ satisfies the following assumptions:  
\begin{list}{}{}
	\item[$(V')$] $V(x)\in C\left(\mathbb{R}^{N},\mathbb{R}\right)$, $\inf_{x\in \mathbb{R}^{N}}V(x)\geq V_{0}>0$ and $\lim _{|x| \rightarrow \infty} V(x)=\infty$;
	\item[$(f_{1})$] $f \in C(\mathbb{R}^{N}\times \mathbb{R},\R)$ and there exist constants $a_{1}, a_{2} \geq 0, q \in\left[2, \frac{2 N+4 s}{N}\right) $ with $\frac{a_{1}}{2 S_{2}^{2}}+\frac{a_{2}}{q S_{q}^{q}}<\frac{1}{2}$
	such that
	$$
	|f(x, u)| \leq a_{1}|u|+a_{2}|u|^{q-1}, \quad \forall  (x, u) \in \R^{N} \times \R,
	$$
	where $S_{q}$ is the best constant for the embedding of $X \subset L^{q}\left(\R^{N}\right)$ and  $$X=\left\{u \in L^{2}\left(\R^{N}\right): \int_{\R^{N}} \int_{\R^{N}} \frac{|u(x)-u(z)|^{2}}{|x-z|^{N+2 s}} d x d z+\int_{\R^{N}} V(x) u(x)^{2} d x<+\infty\right\};$$
	
	\item[$(f_{2})$]  $\lim\limits_{t\rightarrow \infty}\frac{F(x,t)}{\vert t\vert^{2}}=\infty$ uniformly in $x\in \mathbb{R}^{N}$  and there exists $r_{1}>0$ such that $F(x, u) \geq 0$, for any $x \in \R^{N}$, $u \in \R$ and $|u| \geq r_{1}$, where $F(x,t)=\int_{0}^{t}f(x,s)ds$;
	
	\item[$(f_{3})$] $2F(x,u) < f(x,u)u$, $\forall (x,u)\in \mathbb{R}^{N}\times \mathbb{R}$. 
	
	\item[$(F_{4})$] $f(x,-u)=-f(x,u)$ for all $(x,u) \in \R^N\times \R$
\end{list}
By using the symmetric mountain pass theorem, the authors showed that there exists a constant $\lambda_0> 0$ such that, for any $\lambda \in  (0,\lambda_0)$, problem \eqref{li} possesses infinitely many high energy solutions.

Motivated by these works, in the present paper we are concerned with the existence of infinitely many high energy solutions to the following class of fractional Schr\"{o}dinger equation
\begin{equation} \label{1}
(-\Delta )^{s}u+ V(x)u=f(x,u)+\lambda g(x,u), \quad \text{in}\: \R^N, 
\end{equation}
where $\lambda\in \R$ is a parameter, $V(x)$ satisfies assmptions $(V)$  and $f$ and $g$ satisfy the following assumptions 
\begin{list}{}{}
%	\item[$(V)$] $V \in C(\R^N,\R)$ satisfies $\inf_{x\in \mathbb{R}^{N}}  V(x)\geq V_{0}>0$, where $V_{0}$ is a constant. Moreover, there exists $r_0>0$ such that
%	\begin{equation*}
%	\lim_{|y|\rightarrow \infty} \meas\{x \in \mathbb{R}^{N} \: : \:|x-y|\leq r_0,\: V(x)\leq M\}=0,\quad \forall M>0.
%	\end{equation*}
	\item[$(F_1)$] $f\in C(\R^N\times \R,\R)$ and there exist constants $c_1,c_2>0$ and $p\in (2,2_s^*)$ such that
	\begin{equation*}
	|f(x,u)|\leq c_1|u|+c_2|u|^{p-1},\quad \forall (x,u) \in \R^N\times \R,
	\end{equation*}
	where $2_s^*=\frac{2N}{N-2s}$ is the critical Sobolev exponent.
%	\item[$(F_2)$] $\lim\limits_{|u|\rightarrow 0} \frac{f(u)}{u}=0$.
	\item[$(F_2)$] $\lim\limits_{|u|\rightarrow \infty} \frac{F(x,u)}{u^2}=+\infty$ a.e. $x\in \mathbb{R^{N}}$, where $F(x,u)=\int_0^u f(x,t)dt$  and there exists $r_1>0$ such that
	\begin{equation*}
	\inf_{x\in \mathbb{R}^{N},|u| \geq r_1} F(x,u)\geq 0;
	\end{equation*}
	\item[$(F_3)$] There exist constants $\mu >2$, $c_3>0$ and $a_0>0$ such that such that
	\begin{equation*}
\mu F(x,u) \leq f(x,u)u+c_3 |u|^2,\quad \forall (x,|u|) \in \R^N \times[a_0,\infty).
	\end{equation*}
%	\item[$(F_4)$] $f(x,-u)=-f(x,u)$ for all $(x,u) \in \R^N\times \R$;
	
     \item[$(g_1)$] There exist constants $1<\delta_1<\delta_2<2$ and positive functions $\xi_i \in L^{\frac{2}{2-\delta_i}}(\R^N)$ $(i=1,2)$ such that
\begin{equation*}
|g(x,u)|\leq \xi_1(x) |u|^{\delta_1-1} +\xi_2 (x)|u|^{\delta_2-1},\quad \forall (x,u) \in \R^N\times \R.
\end{equation*}
% \item[$(g_2)$] There exists $\nu \in (1,2)$ such that
% \begin{equation*}
% \nu G(x,u) \geq g(x,u)u,\quad \forall (x,u) \in \R^N \times\R.
% \end{equation*}
 \item[$(g_2)$] $g(x,-u)=-g(x,u)$ for all $(x,u) \in \R^N\times \R$;
\end{list}
By using the Fountain theorem (i.e. \cite[Theorem 3.6]{Willem}), we prove the existence of an unbounded sequence of nontrivial solutions $\{u_k\}$ to problem (\ref{1}) under assumptions $(V)$, $(F_1)-(F_4)$ and $(g_1)-(g_2)$. Our result extends and sharply improves that in \cite{24}. 

The remainder of this paper is organized as follows. In section 2, we prepare the variational framework of the studied problem. In Section 3, employing the fountain theorem \eqref{223}, we establish the existence of infinitely many high energy solutions to problem \eqref{1}.

%%%%%%%%%%%%%%%%%%%%%%%%%%%%%%%%%%%%%%%%%%%%%%%%%%%%%%%%%%    SECTION 2    %%%%%%%%%%%%%%%%%%%%%%%%%%%%%%%%%%%%%%%%%%%%%%%%%%%%%%%%%%%%%%%%

\section{Variational setting and main results} 
In this section, for the reader's convince, we shall introduce some notations and we revise some known results about the fractional Sobolev spaces which can be found in \cite{29}. 

As usual, for $1\leq p<+\infty$, we define
\begin{equation*}
\| u\|_{L^p}:=\| u\|_{p}=\left(\int_{\R^N}| u|^{p}dx\right)^{\frac{1}{p}},\quad u\in L^p(\R^N),
\end{equation*}

The fractional Sobolev space $H^{s} (\R^N)=W^{s,2}(\R^N)$ is defined by
\begin{equation*}
H^{s} (\R^N):=\left\{{u\in L^{2}(\R^N)\: :\: \frac{| u(x)-u(y)|}{| x-y|^{\frac{N}{2}+s}}\in L^{2}\left(\R^N\times\R^N\right)}\right\}
\end{equation*}
with the inner product and the norm
\begin{equation*}
\langle u,v\rangle _{H^{s }}=\iint_{\R^N\times \R^N}\frac{(u(x)-u(y))(v(x)-v(y))}{| x-y|^{N+2s}}dxdy+\int_{\R^N}u(x)v(x)dx,
\end{equation*}
\begin{equation*}
\| u\|_{H^{s}}^{2}=\langle u,u\rangle _{H^{s}}=\iint_{\R^N\times \R^N}\frac{| u(x)-u(y)|^{2}}{| x-y|^{N+2s}}dxdy+\int_{\R^N}| u(x)|^{2} dx,
\end{equation*}
where the norm
\begin{equation*}
[u]_{H^{s}}^{2}=\iint_{\R^N\times \R^N}\frac{| u(x)-u(y)|^{2}}{| x-y|^{N+2s}}dxdy
\end{equation*}
is the so called Gagliardo semi-norm of $u$.

Let $\mathscr{S}(\R^N)$ the Schwartz space of rapidly decaying $C^{\infty }$ functions in $\R^N$. We recall that the Fourier transform of a function $u \in \mathscr{S} (\R^N)$ is defined as 
\begin{equation*}
\mathscr{F}u(\xi):= \frac{1}{(2\pi )^{N}}\int_{\R^N}e^{-\mathit{i}x\xi }u (x)dx.
\end{equation*}
By Plancherel's theorem, we have $$\| u \|_{2}=\| \mathscr{F} u \| _{2}, \quad \forall u \in \mathscr{S}(\R^N).$$

Let $s \in (0,1)$, the fractional Laplacian $(-\Delta )^{s}$ of a function $u \in \mathscr{S}(\R^N)$ is defined by means of the Fourier transform as  
\begin{equation*}
\mathscr{F}\left((-\Delta )^{s}u\right) (\xi )=|\xi|^{2s}\mathscr{F}u (\xi ),\quad \forall s \in (0,1).
\end{equation*}
The space $H^{s}(\R^N)$ can also be described via the Fourier transform as follows
\begin{equation*}
H^{s} (\R^N):=\left\{{u\in L^{2}(\R^N)\: :\: \int_{\R^N}\left(1+| \xi |^{2}\right)^{s} |\mathscr{F}u (\xi)|^{2} d\xi <\infty }\right\},
\end{equation*}
and the norm is defined by
\begin{equation*}
\| u\|_{H^{s }}=\left(\int_{\R^N}\left(1+| \xi |^{2}\right)^{s} | \mathscr{F} u (\xi )|^{2} d\xi \right)^{\frac{1}{2}}.
\end{equation*} 

For the problem \eqref{1}, we define the following Hilbert space
\begin{equation*}
H:=\left\{ u \in H^s( \R^N)\: : \: \int_{\R^N} V(x) u(x)^{2} dx <\infty \right\},
\end{equation*}
endowed with the inner product
\begin{equation*}
\langle u,v\rangle:= \langle u,v\rangle_{H} =\iint_{\R^N\times \R^N}\frac{(u(x)-u(y))(v(x)-v(y))}{| x-y|^{N+2s}}dxdy+\int_{\R^N}V(x)u(x)v(x)dx.
\end{equation*}
Then, the norm on $H$ is given by
\begin{equation*}
\|u\|:=\left( \iint_{\R^N\times \R^N}\frac{| u(x)-u(y)|^{2}}{| x-y|^{N+2s}}dxdy+\int_{\R^N}V(x)u(x)^{2} dx\right)^{\frac{1}{2}}.
\end{equation*}
Obviously, by assumptions $(V)$, this norm is equivalent to the standard norm in $H^s( \R^N)$.

From \cite{29}, the embeddings $H^s(\R^N) \hookrightarrow L^p(\R^N)$ is continuous for $p \in [2,2_{s}^{\ast}]$. Therefore,\\ $H \hookrightarrow L^p(\R^N),\: 2\leq p\leq 2_{s}^{\ast}$ is continuous, namely, there exist constants $\eta_p>0$ such that
\begin{equation}\label{6}
\|u\|_{p}\leq \eta_p \|u\|,\quad \forall u \in H,\: p\in [2,2_s^*],
\end{equation}
Moreover, from \cite{teng}, we know that the embedding $H \hookrightarrow L^p(\R^N)$ is compact for $2 \leq  p< 2_s^{*}$ under condition $(V)$. 

For the fractional Schr\"{o}dinger equation (\ref{1}), the associated energy functional is defined on $H$ as follows 
\begin{equation}\label{7}
\begin{split}
I_\lambda(u)=\frac{1}{2}\|u\|^2-\int_{\R^N}F(x,u)dx-\lambda\int_{\R^N}G(x,u)dx.
\end{split}
\end{equation}
%&=\frac{1}{2}\iint_{\R^N\times \R^N}\frac{| u(x)-u(y)|^{2}}{| x-y|^{N+2s}}dxdy+\frac{1}{2}\int_{\R^N} V(x) u(x)^{2} dx-\int_{\R^N}F(x,u)dx-\lambda\int_{\R^N}G(x,u)dx
By hypotheses $(V)$, $(F_1)$ and $(g_1)$, the functional $I$ is well define and of class $C^1(H,\R)$ with
\begin{equation}\label{8}
\langle I_\lambda^{\prime}(u),v\rangle= \int_{\R^{2N}}\frac{(u(x)-u(y))(v(x)-v(y))}{| x-y|^{N+2s}}dxdy+\int_{\R^N}V(x) u v dx-\int_{\R^N}f(x,u)v dx-\lambda \int_{\R^N}g(x,u)v dx,
\end{equation}
for all $v\in H$. Besides, the critical points of $I$ in $H$ are solutions of problem \eqref{1}. Now, we are ready to state the main result of this paper as follows.

\begin{theorem}\label{main-rslt}
	Assume that conditions $(V)$, $(F_1)-(F_4)$ and $(g_1)-(g_2)$ hold. Then there exists a sequence $\{\lambda_k\}_{k\geq 1} \subset \R^+$ such that $\lim\limits_{k\rightarrow \infty}\lambda_k=\infty$ and problem (\ref{1}) possesses infinitely many nontrivial solutions $\{u_k\}$ provided $|\lambda|\leq \lambda_k$. Moreover, there holds
	\begin{equation*}
	I_\lambda (u_k) \rightarrow \infty \text{ as }  k \rightarrow \infty .
	\end{equation*} 
\end{theorem}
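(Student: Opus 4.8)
The plan is to apply the Fountain Theorem (\cite[Theorem 3.6]{Willem}) to the $C^1$ functional $I_\lambda$. Since $H$ is a separable Hilbert space, I would fix a complete orthonormal system and write $H=\overline{\bigoplus_{j\ge 0}X_j}$ with $\dim X_j<\infty$, setting $Y_k=\bigoplus_{j=0}^k X_j$ and $Z_k=\overline{\bigoplus_{j\ge k}X_j}$. Conditions $(F_4)$ and $(g_2)$ make $F(x,\cdot)$ and $G(x,\cdot)$ even, hence $I_\lambda$ is even, and $I_\lambda\in C^1$ was already established. It then remains to produce, for each large $k$, radii $\rho_k>r_k>0$ such that (A1) $\inf\{I_\lambda(u):u\in Z_k,\ \|u\|=r_k\}\to\infty$, (A2) $\max\{I_\lambda(u):u\in Y_k,\ \|u\|=\rho_k\}\le 0$, and (PS) $I_\lambda$ satisfies the Palais--Smale condition at every level. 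The divergent threshold sequence $\{\lambda_k\}$ will emerge from (A1).

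For (A1) I would use the standard fact that $\beta_k(q):=\sup\{\|u\|_q:u\in Z_k,\ \|u\|=1\}\to 0$ as $k\to\infty$ for each $q\in[2,2_s^*)$, which holds because $H\hookrightarrow L^q$ is compact under $(V)$. From $(F_1)$ one has $|F(x,u)|\le \tfrac{c_1}{2}|u|^2+\tfrac{c_2}{p}|u|^p$, and from $(g_1)$ with H\"older's inequality $\int_{\R^N}|G(x,u)|\,dx\le C(\|u\|^{\delta_1}+\|u\|^{\delta_2})$. Hence for $u\in Z_k$ with $\|u\|=r_k$ and $k$ large enough that $\tfrac{c_1}{2}\beta_k(2)^2\le\tfrac14$,
\[
I_\lambda(u)\ \ge\ \tfrac14\|u\|^2-\tfrac{c_2}{p}\beta_k(p)^p\|u\|^p-|\lambda|\,C(\|u\|^{\delta_1}+\|u\|^{\delta_2}).
\]
Choosing $r_k:=\big(\tfrac{p}{8c_2}\,\beta_k(p)^{-p}\big)^{1/(p-2)}\to\infty$ balances the first two terms and gives $I_\lambda(u)\ge \tfrac18 r_k^2-|\lambda|\,C(r_k^{\delta_1}+r_k^{\delta_2})$. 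Setting $\lambda_k:=\tfrac{r_k^2}{16C(r_k^{\delta_1}+r_k^{\delta_2})}$, which tends to $\infty$ because $\delta_1,\delta_2<2$ and $r_k\to\infty$, I obtain $I_\lambda(u)\ge\tfrac1{16}r_k^2\to\infty$ whenever $|\lambda|\le\lambda_k$. This is precisely the claimed behaviour of $\{\lambda_k\}$, and it is the place where the sublinear perturbation forces the constraint on $\lambda$.

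For (A2) I would argue on the finite-dimensional space $Y_k$, where all norms are equivalent, that $I_\lambda(u)\to-\infty$ as $\|u\|\to\infty$; any $\rho_k>r_k$ large enough then works. Taking $u_n\in Y_k$ with $\|u_n\|\to\infty$ and $v_n=u_n/\|u_n\|\to v$ strongly (as $\dim Y_k<\infty$) with $\|v\|=1$, the sublinear contribution $\lambda\int_{\R^N} G(x,u_n)/\|u_n\|^2\to 0$ since $\delta_i<2$. Combining $(F_1)$ and $(F_2)$ yields the uniform lower bound $F(x,t)/t^2\ge -C$, so that $F(x,u_n)/u_n^2\,v_n^2+Cv_n^2\ge 0$ is integrable; Fatou's lemma on $\{v\neq0\}$ (of positive measure, where $|u_n|\to\infty$ forces $F(x,u_n)/u_n^2\to+\infty$ by $(F_2)$) then gives $\int_{\R^N} F(x,u_n)/\|u_n\|^2\to+\infty$. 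Therefore $I_\lambda(u_n)/\|u_n\|^2\to-\infty$, which proves (A2).

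The step I expect to be the \textbf{main obstacle} is the Palais--Smale condition, specifically the boundedness of $(PS)_c$ sequences. For this I would test with the combination $\mu I_\lambda(u_n)-\langle I_\lambda'(u_n),u_n\rangle=(\tfrac{\mu}{2}-1)\|u_n\|^2-\int_{\R^N}(\mu F-f u_n)-\lambda\int_{\R^N}(\mu G-g u_n)$, estimating the superlinear defect by $(F_3)$ for $|u|\ge a_0$ and by $(F_1)$ on $\{|u|<a_0\}$, where $|u|^p\le a_0^{p-2}|u|^2$, to obtain $\mu F(x,u)-f(x,u)u\le \tilde c\,|u|^2$ pointwise, while $(g_1)$ and H\"older control $\int_{\R^N}|\mu G-g u_n|\le C(\|u_n\|^{\delta_1}+\|u_n\|^{\delta_2})$. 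The delicate point is that the quadratic remainder $\tilde c\,\eta_2^2\|u_n\|^2$ must be absorbed by $(\tfrac{\mu}{2}-1)\|u_n\|^2$; verifying that the structural constants of $(F_1)$ and $(F_3)$ leave a strictly positive coercive coefficient is the crux. Granting boundedness, $u_n\rightharpoonup u$ in $H$ and, by the compact embedding $H\hookrightarrow L^q$ ($2\le q<2_s^*$), $u_n\to u$ in $L^q$ and a.e.; the growth bounds $(F_1)$ and $(g_1)$ then force the nonlinear part of $\langle I_\lambda'(u_n)-I_\lambda'(u),u_n-u\rangle$ to vanish, whence $\|u_n-u\|\to 0$ and $(PS)_c$ holds at every level. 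With evenness, (A1), (A2) and (PS) in place, the Fountain Theorem yields an unbounded sequence of critical values, i.e. solutions $u_k$ of \eqref{1} with $I_\lambda(u_k)\to\infty$, valid for every $|\lambda|\le\lambda_k$.
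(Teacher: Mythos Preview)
Your overall architecture---Fountain Theorem, evenness from $(F_4)$ and $(g_2)$, the choice of $r_k$ and $\lambda_k$ in the $(A_1)$ step, and the finite-dimensional blow-up argument for $(A_2)$---matches the paper's proof essentially line for line. The one genuine gap is exactly where you flagged it: the boundedness of (PS)$_c$ sequences.

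Your plan to absorb the quadratic remainder $\tilde c\,\eta_2^2\|u_n\|^2$ into $(\tfrac{\mu}{2}-1)\|u_n\|^2$ cannot succeed under the stated hypotheses. Nothing in $(F_1)$ or $(F_3)$ relates $c_1,c_2,c_3,a_0,\mu$ to the embedding constant $\eta_2$, so in general $(\tfrac{\mu}{2}-1)-\tilde c\,\eta_2^2$ may be negative; the ``coercive coefficient'' you hope to verify simply need not exist. The paper does \emph{not} absorb. Instead it reads the inequality
\[
C_3\ \ge\ \tfrac{\mu-2}{2}\|u_n\|^2-c_4\|u_n\|_2^2-|\lambda|\bigl(C_1\|u_n\|^{\delta_1}+C_2\|u_n\|^{\delta_2}\bigr)
\]
the other way: assuming $\|u_n\|\to\infty$ and dividing by $\|u_n\|^2$, the sublinear terms vanish and one obtains a \emph{lower} bound $\|v_n\|_2^2\ge\tfrac{\mu-2}{4c_4}>0$ for $v_n=u_n/\|u_n\|$. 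This is the only use of $(F_3)$: it guarantees that $v_n$ retains positive $L^2$-mass. The contradiction is then produced by exactly the Fatou argument you already wrote for $(A_2)$: from $I_\lambda(u_n)/\|u_n\|^2\to 0$, split $\int F(x,u_n)/\|u_n\|^2$ over $\{|u_n|\le r_1\}$ (bounded by $(F_1)$) and its complement, and use $(F_2)$ on the set where the (weak/a.e.) limit of $v_n$ is nonzero to force the integral to $+\infty$. In short, transplant your own $(A_2)$ mechanism into the (PS) step, with $(F_3)$ serving only to feed it the nondegeneracy $\|v_n\|_2\ge c>0$; the direct coercivity route is a dead end.
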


\begin{remark}	
Since the problem (\ref{1}) is defined on the entire space $\mathbb{R}^N$, the main difficulty of this problem is the lack of compactness for Sobolev embedding theorem. In the context of studying of the existence of solutions for the classical Schr\"odinger equation
		\begin{equation*}
		-\Delta u+V(x) u=f(x, u),\quad x \in R^{N},
		\end{equation*}
		Bartsch et al. \cite{Bartsch} presented the general conditions $(V)$ which guarantee the compactness of the embeddings $\left\{u \in H^{1}\left(R^{N}\right)\::\: \int_{R^{N}} V(x) u(x)^{2} d x<+\infty\right\}\hookrightarrow  L^p\left(\R^{N}\right),\: p\in [2,\frac{2N}{N-2}]$. Furthermore, conditions $(V)$ are weaker than the coercivity condition $(V')$ used in \cite{24}.
\end{remark}

\begin{remark}			
Firstly, comparing with Theorem 1.1 in \cite{24}, our assumptions $(F_1)-(F_3)$ are more general than $(f_1)-(f_3)$. Indeed, Let $f(u)=au+b|u|^{p-2}u$, where $a>2S_2^2$, $b > qS_q^q$ and $p\in (2,2_2^*)$. Then, clearly $f$ satisfies $(F_1)$ but not $(f_1)$ since $\frac{a}{2 S_{2}^{2}}+\frac{b}{q S_{q}^{q}}>2$. Secondly, let 
			\begin{equation*}
		f(t)= 3 |t|t-\frac{15}{2} |t|^{1 / 2}t+t,\quad t\in \R.
		\end{equation*}
		Then,
		\begin{equation*}
		F(t)= |t|^{3}-3 |t|^{5 / 2}+\frac{1}{2}t^2.
		\end{equation*}
	It is easy to verify that the above function $f$ satisfies $(F_1)$, $(F_2)$,  $(F_4)$ and $(F_3)$ with $\mu=\frac{5}{2}$. However, $f$ does not satisfy $(f_3)$, in fact we have
		\begin{equation*}
     	f(t)t-2  F(t) = |t|^{3}-\frac{3}{2}|t|^{5 / 2} \leq 0,\quad \forall t \in  \left[-\frac{9}{4},\frac{9}{4}\right].
		\end{equation*}
		This shows that $(f_3)$ is not satisfied. Finally, it is easy to see that $\widetilde{g}(x, u)=\lambda h(x)|u|^{p-2} u$ considered in \eqref{li} is a special case of $g(x, u)$ considered in this paper. Furthermore, unlike \eqref{li}, the parameter $\lambda$ in \eqref{1} is is allowed to be sign-changing. Consequently, Theorem \ref{main-rslt} generalizes and sharply improves Theorem 1.1 in \cite{24}.
	\end{remark}

\begin{remark}	
When $s = 1$, equation \eqref{1} becomes the classical Schr\"odinger equation
		\begin{equation*}
		-\Delta u+V(x) u=f(x, u)+\lambda g(x,u),\quad x \in R^{N},
		\end{equation*}
		As far as we know, our result is new even for the case $s=1$.  

\end{remark}

%\begin{ex}
%Here, we provide some examples of the functions $f(t)$ and $V(x)$. 
%\end{ex}

%%%%%%%%%%%%%%%%%%%%%%%%%%%%%%%%%%%%%%%%%%%%%%%%%%%%%%%%%%    SECTION 3    %%%%%%%%%%%%%%%%%%%%%%%%%%%%%%%%%%%%%%%%%%%%%%%%%%%%%%%%%%%%%%%%

\section{Proof of the main result}

Hereafter, we shall use $c_i,C_i,\: i=1,2,\ldots$ to denote various positive constants which may change from line to line. We start this section by introducing some variational preliminaries and abstract results that we need to prove our main results.

\begin{definition}\label{PS-seq}[(PS)-condition]
	\begin{itemize}
\item A sequence $\{u_{n}\}\subset H$ is said to be a Palais-Smale sequence at level $c\in \R$ ((PS)$_{c}$ sequence for short) if $I(u_{n})\rightarrow c$ and $I^{\prime}(u_{n})\rightarrow 0$ in $H^*$ the dual space of $H$.
\item The functional $I$ satisfies the Palais-Smale condition at the level $c$ ((PS)$_{c}$ condition for short) if any (PS)$_{c}$ sequence has a convergent subsequence.
	\end{itemize}
\end{definition}   

\begin{lemma}\label{3331}
Under the assumptions of Theorem \ref{main-rslt}, the functional $I_\lambda$ satisfies the (PS)$_{c}$ condition for any $c>0$.
\end{lemma}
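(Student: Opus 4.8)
The plan is to follow the classical two-step scheme for the Palais--Smale condition. Let $\{u_n\}\subset H$ be a $(PS)_c$ sequence, i.e.\ $I_\lambda(u_n)\to c$ and $I_\lambda'(u_n)\to 0$ in $H^*$. First I would prove that $\{u_n\}$ is bounded in $H$, and then, using that the embedding $H\hookrightarrow L^p(\R^N)$ is compact for $2\le p<2_s^*$ under $(V)$ (see \cite{teng}), I would promote the weak convergence of a subsequence to strong convergence in $H$. The parameter $\lambda$ will be harmless throughout, since by $(g_1)$ every contribution coming from $g$ and $G$ is strictly sublinear.

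For the boundedness I would argue by contradiction, supposing $\|u_n\|\to\infty$ along a subsequence and setting $v_n=u_n/\|u_n\|$, so that $\|v_n\|=1$. The natural combination expands as
\[
I_\lambda(u_n)-\frac{1}{\mu}\langle I_\lambda'(u_n),u_n\rangle
=\Big(\tfrac12-\tfrac1\mu\Big)\|u_n\|^2+\frac1\mu\int_{\R^N}\!\big(f(x,u_n)u_n-\mu F(x,u_n)\big)\,dx
+\lambda\int_{\R^N}\!\Big(\tfrac1\mu g(x,u_n)u_n-G(x,u_n)\Big)dx .
\]
The left side is $O(\|u_n\|)$ because $I_\lambda(u_n)$ is bounded and $\langle I_\lambda'(u_n),u_n\rangle=o(\|u_n\|)$. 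For the $f$-term I would split $\R^N$ into $\{|u_n|\ge a_0\}$, where $(F_3)$ gives $f(x,u_n)u_n-\mu F(x,u_n)\ge -c_3|u_n|^2$, and $\{|u_n|<a_0\}$, where $(F_1)$ together with $|u_n|^{p}\le a_0^{p-2}|u_n|^2$ gives $|f(x,u_n)u_n-\mu F(x,u_n)|\le C'|u_n|^2$; hence this term is $\ge -\frac{c_3+C'}{\mu}\|u_n\|_2^2$. The $g$-term is $O(\|u_n\|^{\delta_1}+\|u_n\|^{\delta_2})$ by $(g_1)$ and Hölder's inequality with exponents $\frac{2}{2-\delta_i}$ and $\frac{2}{\delta_i}$. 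Dividing by $\|u_n\|^2$ and letting $n\to\infty$, every lower-order term vanishes and I arrive at
\[
0\ge \Big(\tfrac12-\tfrac1\mu\Big)-\frac{c_3+C'}{\mu}\,\limsup_{n\to\infty}\|v_n\|_2^2 ,
\]
so $\limsup_n\|v_n\|_2^2\ge\frac{\mu/2-1}{c_3+C'}>0$ (recall $\mu>2$). Since $v_n\to v$ in $L^2(\R^N)$ by compactness, $v\ne0$. On $\Omega=\{v\ne0\}$, which has positive measure, $|u_n|=|v_n|\,\|u_n\|\to\infty$ a.e., so $(F_2)$ yields $F(x,u_n)/\|u_n\|^2=(F(x,u_n)/u_n^2)v_n^2\to+\infty$ a.e.\ on $\Omega$; since $(F_1)$ and the lower bound in $(F_2)$ force $F(x,u)\ge-M_0$ for all $(x,u)$ and some constant $M_0$, Fatou's lemma gives $\int_{\R^N}F(x,u_n)/\|u_n\|^2\,dx\to+\infty$. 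But dividing $I_\lambda(u_n)\to c$ by $\|u_n\|^2$ shows that $\frac12-\int_{\R^N}F(x,u_n)/\|u_n\|^2\,dx-\lambda\int_{\R^N}G(x,u_n)/\|u_n\|^2\,dx\to0$ with the last integral $o(1)$, which is impossible. Hence $\{u_n\}$ is bounded.

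With boundedness in hand, a subsequence satisfies $u_n\rightharpoonup u$ in $H$ and $u_n\to u$ in $L^p(\R^N)$ for all $p\in[2,2_s^*)$. I would then use the identity
\[
\|u_n-u\|^2=\langle I_\lambda'(u_n)-I_\lambda'(u),u_n-u\rangle+\int_{\R^N}\!\big(f(x,u_n)-f(x,u)\big)(u_n-u)\,dx+\lambda\!\int_{\R^N}\!\big(g(x,u_n)-g(x,u)\big)(u_n-u)\,dx .
\]
The first term tends to $0$ because $I_\lambda'(u_n)\to0$ in $H^*$ and $u_n\rightharpoonup u$. The $f$-integral is estimated by $(F_1)$ and Hölder (the pair $(2,2)$ for the linear part and $(\frac{p}{p-1},p)$ for the $|u|^{p-1}$ part), giving a bound $C(\|u_n-u\|_2+\|u_n-u\|_p)\to0$; the $g$-integral is estimated by $(g_1)$ and the three-factor Hölder inequality with exponents satisfying $\frac{2-\delta_i}{2}+\frac{\delta_i-1}{2}+\frac12=1$, giving a bound $C\|u_n-u\|_2\to0$. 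Thus $\|u_n-u\|\to0$, which establishes the $(PS)_c$ condition.

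The main obstacle is the boundedness step. Because $(F_3)$ is only a weakened Ambrosetti--Rabinowitz condition — it carries the extra term $c_3|u|^2$ and is valid solely for $|u|\ge a_0$ — the direct estimate of $I_\lambda(u_n)-\frac1\mu\langle I_\lambda'(u_n),u_n\rangle$ leaves an uncontrolled $\|u_n\|_2^2$ contribution and does not by itself bound $\|u_n\|$. The device that closes the argument is the normalization $v_n=u_n/\|u_n\|$, which converts that quadratic defect into the strictly positive lower bound $\limsup_n\|v_n\|_2^2>0$, thereby forcing $v\ne0$, after which the super-quadratic growth $(F_2)$ together with Fatou's lemma produces the contradiction.
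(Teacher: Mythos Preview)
Your argument follows the paper's proof almost step by step: contradict boundedness by combining $\mu I_\lambda(u_n)-\langle I_\lambda'(u_n),u_n\rangle$ with $(F_3)$ (extended to all of $\R$ via $(F_1)$ on $\{|u|<a_0\}$) to force $\|v_n\|_2$ away from zero, then use $(F_2)$ and Fatou to reach a contradiction, and finally exploit the compact embedding $H\hookrightarrow L^p$ to upgrade weak to strong convergence. The convergence step and all Hölder estimates are correct.

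There is, however, one genuine technical gap in your Fatou step. You justify Fatou for $\int_{\R^N}F(x,u_n)/\|u_n\|^2\,dx$ by the bound $F(x,u)\ge -M_0$. But then the minorant is the constant $-M_0/\|u_n\|^2$, which is \emph{not} integrable over $\R^N$, so Fatou's lemma does not apply as stated. The paper repairs this by splitting the domain: on $\{|u_n|<r_1\}$ assumption $(F_1)$ gives $|F(x,u_n)|/\|u_n\|^2\le C|v_n|^2$, whose integral is bounded by $C\eta_2^2$; on $\{|u_n|\ge r_1\}$ assumption $(F_2)$ makes the integrand nonnegative, and since on $\Omega=\{v\ne 0\}$ you have $|u_n|\to\infty$ (hence eventually $|u_n|\ge r_1$) with $F(x,u_n)/u_n^2\to+\infty$ and $v_n^2\to v^2>0$, Fatou applied to this nonnegative piece yields $+\infty$. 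With this splitting your contradiction goes through. Aside from this point, your passage to $v_n\to v$ in $L^2$ via compactness and the conclusion $v\not\equiv 0$ is actually cleaner than the paper's formulation with the $n$-dependent sets $A_n=\{v_n\ne 0\}$.
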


\begin{proof}
	Let $\{u_n\}\subset H$ be any (PS) sequence of $I_\lambda$, that is, 
	\begin{equation}\label{PS-C}
	I_\lambda(u_n) \rightarrow c >0 ,\quad I_\lambda'(u_n)\rightarrow 0 \text{ in } H^*.
	\end{equation}
	First, we prove that $\{u_n\}$ is bounded in $H$. Arguing indirectly, suppose that $\|u_n\|\rightarrow \infty$ as $n\rightarrow \infty$. Set $v_n = \frac{u_n}{\|u_n\|}$, then $\|v_n\|=1$, thus $\{v_n\}$ is bounded in $H$. Using assumption $(F_1)$ we have
	\begin{equation}\label{estm-1}
	\begin{split}
	|F(x,u)| =\left|F(x,u)-F(x,0)\right|	& =\left|\int_{0}^{1}f(x,tu)udt\right|\\
	& \leq \int_{0}^{1}\left(c_1|u|^2 t+c_2|u|^p t^{p-1}\right)dt\\
	& =\frac{c_1}{2}|u|^2+\frac{c_2}{p}|u|^p,\qquad \forall (x,u)\in \mathbb{R}^N\times \mathbb{R}.
	\end{split}
	\end{equation}
	Set $\mathcal{F}(x,u_n)=f(x,u_n)u_n-\mu F(x,u_n)$. Therefore, for $x\in\mathbb{R}^{N}$ and $|u(x)| < a_0$, by (\ref{estm-1}), we have
	\begin{equation*}
	\begin{split}
	\left|f(x,u) u- \mu F(x,u)\right| & \leq  \vert f(x,u) u\vert+\mu \vert F(x,u)\vert\\
	& \leq   \left(c_{1}\vert u\vert^{2}+c_{2}\vert u \vert^{p}\right)+\left( c_{1} \frac{\mu}{2}\vert u\vert^{2}+c_{2}\frac{\mu }{p}\vert u\vert^{p}\right)\\
	& \leq  \left(\frac{2+\mu}{2} c_{1}+\frac{p+\mu}{p}c_{2}a_0^{p-2}\right)\vert u\vert^{2}\\
	& = c_{3}\vert u \vert^{2},
	\end{split}
	\end{equation*}
	where $\mu$ and $a_0>0$ are given in $(F_3)$. Combining the above inequality with $(F_3)$, we conclude that there exists $c_4>0$ such that
	\begin{equation}\label{estim-2}
 \mathcal{F}(x,u)=	f(x,u) u- \mu F(x,u)\geq -c_4|u|^2,\quad \forall (x,u)\in \mathbb{R}^{N}\times \mathbb{R}.
	\end{equation} 
	By (\ref{7}), (\ref{8}) and (\ref{estim-2}) , we have
	\begin{equation}\label{I-estm-1}
	\begin{split}
\mu I_\lambda(u_n)-\langle I_\lambda^{\prime}(u_n),u_n\rangle=& \frac{\mu-2}{2}\|u_n\|^2+\int_{\R^N}\mathcal{F}(x,u_n)dx-\lambda\int_{\R^N}\mathcal{G}(x,u_n)dx  \\
	\geq & \frac{\mu-2}{2}\|u_n\|^2 -c_4 \int_{\R^N} |u_n|^2 dx-\lambda\int_{\R^N}\mathcal{G}(x,u_n)dx,
	\end{split}
	\end{equation}
	where $\mathcal{G}(x,u):=g(x,u)u-\mu G(x,u)$. By $(g_1)$ one has
	\begin{equation}\label{g-estm-1}
	\begin{split}
|\mathcal{G}(x,u)|=|g(x,u)u-\mu G(x,u)| &\leq |g(x,u)u|+\mu |G(x,u)|\\
& \leq \xi_1(x)|u|^{\delta_1}+\xi_2(x)|u|^{\delta_2}+\frac{\mu}{\delta_1}\xi_1(x)|u|^{\delta_1}+\frac{\mu}{\delta_2}\xi_2(x)|u|^{\delta_2}\\
&:=\gamma_1 \xi_1(x)|u|^{\delta_1}+\gamma_2\xi_2(x)|u|^{\delta_2},
\end{split}
	\end{equation}
where $\gamma_i=\frac{\mu + \delta_i}{\delta_i}$ ($i=1,2$). Since $\xi_i \in L^{\frac{2}{2-\delta_i}}(\R^N)$, it follows from \eqref{g-estm-1}, the H\"{o}lder inequality and \eqref{6}
\begin{equation}\label{g-estm-2}
\begin{split}
	\left|\int_{\R^N}\mathcal{G}(x,u_n)dx\right| \leq \int_{\R^N} |\mathcal{G}(x,u_n)|dx
	&\leq \gamma_1 \int_{\R^N} \xi_1(x)|u_n|^{\delta_1}dx +\gamma_2 \int_{\R^N} \xi_2(x)|u_n|^{\delta_2}dx\\
	&\leq \sum_{i=1}^2 \gamma_i \left(\int_{\R^N}|\xi_i(x)|^{\frac{2}{2-\delta_i}}dx\right)^{\frac{2-\delta_i}{2}}\left(\int_{\R^N}|u_n|^2dx\right)^{\frac{\delta_i}{2}}\\
	&\leq \gamma_1  \|\xi_1\|_{\frac{2}{2-\delta_1}}\|u_n\|_{2}^{\delta_1}+\gamma_2 \|\xi_2\|_{\frac{2}{2-\delta_2}}\|u_n\|_{2}^{\delta_2}\\
	&\leq  \gamma_1 \eta_2^{\delta_1} \|\xi_1\|_{\theta_1}\|u_n\|^{\delta_1}+\gamma_2\eta_2^{\delta_2} \|\xi_2\|_{\theta_2}\|u_n\|^{\delta_2}\\
	&:= C_1 \|\xi_1\|_{\theta_1}\|u_n\|^{\delta_1}+C_2 \|\xi_2\|_{\theta_2}\|u_n\|^{\delta_2},
\end{split}
\end{equation}
where $C_i= \gamma_i \eta_2^{\delta_i}$ and $\theta_i =\frac{2}{2-\delta_i}$, $i=1,2$.
Combining \eqref{PS-C} with \eqref{I-estm-1} and \eqref{g-estm-2}, for sufficiently large $n\in \N$, there exists a constant $C_3>0$ such that
\begin{equation*}
\begin{split}
C_3 & \geq \mu I_\lambda(u_n)-\langle I_\lambda^{\prime}(u_n),u_n\rangle  \\
\geq & \frac{\mu-2}{2}\|u_n\|^2 -c_4 \int_{\R^N} |u_n|^2 dx-\lambda\int_{\R^N}\mathcal{G}(x,u_n)dx\\
\geq & \frac{\mu-2}{2}\|u_n\|^2 -c_4 \|u_n\|_2^2-|\lambda|\left( C_1 \|\xi_1\|_{\theta_1}\|u_n\|^{\delta_1}+C_2 \|\xi_2\|_{\theta_2}\|u_n\|^{\delta_2}\right),
\end{split}
\end{equation*}	
which yields
\begin{equation*}
\frac{\|u_n\|_2^2}{\|u_n\|^2}\geq \frac{\mu-2}{2c_4}-\frac{1}{c_4}\left[\frac{C_3}{\|u_n\|^2}+|\lambda|\frac{C_1 \|\xi_1\|_{\theta_1}}{\|u_n\|^{2-\delta_1}}+|\lambda|\frac{C_2 \|\xi_2\|_{\theta_2}}{\|u_n\|^{2-\delta_2}}\right].
\end{equation*}	
Since $1<\delta_1<\delta_2<2$ and $\|u_n\|\rightarrow \infty$, we can choose a large $n\in \N$ so that $$\frac{C_3}{\|u_n\|^2}+|\lambda|\frac{C_1 \|\xi_1\|_{\theta_1}}{\|u_n\|^{2-\delta_1}}+|\lambda|\frac{C_2 \|\xi_2\|_{\theta_2}}{\|u_n\|^{2-\delta_2}}\leq \frac{\mu-2}{4c_4},$$
we then conclude
\begin{equation}\label{v-n-L2}
\|v_n\|_2^2=\frac{\|u_n\|_2^2}{\|u_n\|^2}\geq \frac{\mu-2}{4 c_4}>0.
\end{equation}
Set $\Omega_n=\{x \in \mathbb{R}^{N}\: :\: |u_n(x)|\leq r_1 \}$ and $A_n=\{x \in \mathbb{R}^{N}\: :\: v_n(x)\neq 0\}$, then $\meas(A_n)>0$ due to \eqref{v-n-L2}. Besides, since $\|u_n\|\rightarrow \infty$ as $n\rightarrow \infty$, we obtain
\begin{equation}\label{u-n-cvrg}
|u_n(x)|\rightarrow \infty\quad \text{as}\quad  n\rightarrow \infty,\quad  \forall x\in A_n.
\end{equation}
Hence, $A_n\subseteq \mathbb{R}^N \setminus \Omega_n$ for $n\in \mathbb{N}$ large enough. 

Similarly to \eqref{g-estm-2}, by $(g_1)$, \eqref{6} and H\"{o}lder's inequality, we derive that
\begin{equation}\label{g-estm-3}
\begin{split}
\int_{\R^N}  G(x,u_n)dx &\leq \int_{\R^N} \xi_1(x)|u_n|^{\delta_1}dx + \int_{\R^N} \xi_2(x)|u_n|^{\delta_2}dx\\
&\leq \|\xi_1\|_{\theta_1}\|u_n\|_2^{\delta_1}+ \|\xi_2\|_{\theta_2}\|u_n\|_2^{\delta_2}\\
&\leq \|\xi_1\|_{\theta_1}\eta_2^{\delta_1}\|u_n\|^{\delta_1}+ \|\xi_2\|_{\theta_2}\eta_2^{\delta_2}\|u_n\|^{\delta_2}
\end{split}
\end{equation}
Therefore 
\begin{equation}\label{G-estm}
\int_{\mathbb{R}^{N}}\frac{|G(x,u_n)|}{\|u_n\|^2}dx\leq \frac{\|\xi_1\|_{\theta_1}\|u_n\|^{\delta_1}+ \|\xi_2\|_{\theta_2}\|u_n\|^{\delta_2}}{\|u_n\|^2} \longrightarrow 0, \quad \text{ as } n\rightarrow \infty,
\end{equation}
in view of $\|u_n\| \rightarrow \infty$ and $1< \delta_1<\delta_2<2$. Hence, by $(F_1)$, $(F_2)$, \eqref{6}, \eqref{7}, \eqref{PS-C}, \eqref{u-n-cvrg}, \eqref{G-estm} and Fatou’s lemma, we obtain
\begin{equation}\label{ctrd-inqlt}
\begin{split}
0=\lim\limits_{n\rightarrow \infty}\frac{I_\lambda(u_n)}{\|u_n\|^2}
& =\lim\limits_{n\rightarrow \infty}\left[\frac{1}{2}-\int_{\mathbb{R}^{N}}\frac{F(x,u_n)}{\|u_n\|^2}dx-\lambda \int_{\mathbb{R}^{N}}\frac{G(x,u_n)}{\|u_n\|^2}dx\right]\\
&= \frac{1}{2}+\lim\limits_{n\rightarrow \infty}\left[-\int_{\Omega_n}\frac{F(x,u_n)}{u_n^2}v_n^2 dx-\int_{\mathbb{R}^{N}\setminus\Omega_n}\frac{F(x,u_n)}{u_n^2} v_n^2 dx\right]\\
& \leq \frac{1}{2}+\limsup\limits_{n\rightarrow \infty}\left[ \left(c_1+c_2 r_1^{p-2}\right)\int_{\mathbb{R}^{N}} |v_n|^2dx -\int_{\mathbb{R}^{N}\setminus\Omega_n}\frac{F(x,u_n)}{u_n^2} v_n^2 dx\right]\\
& \leq \frac{1}{2}+\left(c_1+c_2 r_1^{p-2}\right)\eta_2^2-\liminf\limits_{n \rightarrow \infty}\int_{\mathbb{R}^{N}\setminus\Omega_n}\frac{F(x,u_n)}{u_n^2} v_n^2 dx\\
& \leq C_4-\int_{A_n}\liminf\limits_{n \rightarrow \infty}\frac{F(x,u_n)}{u_n^2} v_n^2 dx\\
&= C_4- \int_{\mathbb{R}^{N}}\liminf\limits_{n \rightarrow \infty}\frac{F(x,u_n)}{u_n^2}[\chi_{A_n}(x)]v_n^2 dx \longrightarrow -\infty, \quad \text{as}\quad n\rightarrow \infty.
\end{split}
\end{equation}
 This is an obvious contradiction. Consequently, $\{u_n\}$ is bounded in $H$.
	
	Since $\{u_n\}$ is bounded in $H$, then there exists a constant $M>0$ such that 
	\begin{equation}\label{u-n-bnd}
	\|u_n\|\leq M, \quad \forall n\in \N.
	\end{equation}
	Furthermore, passing to a subsequence, there is $u \in H$ such that
	\begin{equation}\label{embed-2}
	\begin{split}
	& u_n \rightharpoonup u  \text{ in } H;\\
	& u_n \rightarrow u\text{ in } L^p(\R^N),\quad 2\leq p<2_s^*;\\
	& u_n \rightarrow u \text{ a.e. in } \R^N.
	\end{split}
	\end{equation}  
	By $(F_1)$, \eqref{6}, \eqref{u-n-bnd}, the H\"older inequality and \eqref{embed-2}, it has
	\begin{equation} \label{f-conv}
	\begin{split}
	\int_{\R^N}\left|f(x,u_n)-f(x,u)\right|(u_n-u) dx & \leq \int_{\R^N} |f(x,u_n)|(u_n-u) dx + \int_{\R^N} |f(x,u)|(u_n-u) dx\\
    & \leq	c_1\int_{\R^N}\left(|u_n|+|u|\right)(u_n-u)dx+c_2 \int_{\R^N}(|u_n|^{p-1}+|u|^{p-1})(u_n-u)dx\\
	&\leq   C_5 \|u_n-u\|_2+C_6\|u_n-u\|_p=o_n(1),
	\end{split}
	\end{equation}
	where $C_5=c_1(\eta_2M+\|u\|_2)$, $C_6=c_2 \left(\eta_p^{p-1}M^{p-1}+\|u\|_p^{p-1}\right)$ and $o_n(1)\rightarrow 0$ as $n\rightarrow \infty$.\\ On the other hand, it follows from $(g_1)$, \eqref{6}, \eqref{u-n-bnd}, H\"older's inequality and \eqref{embed-2} that 
	\begin{equation} \label{g-conv}
	\begin{split}
		\int_{\R^N}|g(x,u_n)-g(x,u)|(u_n-u)dx &\leq \int_{\R^N} |g(x,u_n)|(u_n-u)dx+\int_{\R^N} |g(x,u)|(u_n-u)dx\\
		&\leq  \int_{\R^N} \xi_1(x)|u_n|^{\delta_1-1}(u_n-u)dx + \int_{\R^N} \xi_2(x)|u_n|^{\delta_2-1}(u_n-u)dx\\
		&+ \int_{\R^N} \xi_1(x)|u|^{\delta_1-1}(u_n-u)dx + \int_{\R^N} \xi_2(x)|u|^{\delta_2-1}(u_n-u)dx\\
		&\leq \|\xi_1\|_{\frac{2}{2-\delta_1}}\left(\|u_n\|_{2}^{\delta_1-1}+\|u\|_{2}^{\delta_1-1}\right)\|u_n-u\|_{2}\\
		&+\|\xi_2\|_{\frac{2}{2-\delta_2}}\left(\|u_n\|_{2}^{\delta_2-1}+\|u\|_{2}^{\delta_2-1}\right)\|u_n-u\|_{2}\\
		&\leq (M_1+M_2)\|u_n-u\|_{2}=o_n(1),
	\end{split}
	\end{equation}
where $M_i=	\|\xi_i\|_{\frac{2}{2-\delta_i}}\left(\eta_{2}^{\delta_i-1}M^{\delta_i-1}+\|u\|_{2}^{\delta_i-1}\right),\: i=1,2$.
Then, combining  (\ref{8}), \eqref{PS-C}, \eqref{f-conv} and \eqref{g-conv}, for $n\in \N$ large enough, we have
	\begin{equation*}
	\begin{split}
	o_n(1)& =\langle I_\lambda'(u_n)-I_\lambda'(u),u_n-u\rangle\\
	&= \|u_n-u\|^2-\int_{\R^N}\left[f(x,u_n)-f(x,u)\right](u_n-u) dx-\lambda\int_{\R^N}\left[g(x,u_n)-g(x,u)\right](u_n-u) dx\\
	&=\|u_n-u\|^2+o_n(1).
	\end{split}
	\end{equation*}
Consequently, $u_n\rightarrow u$ strongly in $H$ as $n\rightarrow \infty$. Thus, the functional $I$ satisfies the (PS)$_c$ condition for any $c>0$. The proof is completed. 	
\end{proof}

%%%%%%%%%%%%%%%%%%%%%%%%%%%%%%%%%%%%%%%%%%%%%%%%%%%%%%%%%%   SECTION 4    %%%%%%%%%%%%%%%%%%%%%%%%%%%%%%%%%%%%%%%%%%%%%%%%%%%%%%%%%%%%%%%%

Let $(X,\|\cdot\|)$ be a Banach space such that $X=\overline{\oplus_{i=1}^{\infty} X_{i}}$ with $\operatorname{dim} X_{i}<+\infty$ for each $i \in \mathbb{N} .$ Set
$$
Y_{k}=\bigoplus_{i=1}^{k} X_{i}, \quad Z_{k}=\bigoplus_{i=k}^{\infty} X_{i}.
$$
In order to prove \autoref{main-rslt}, we shall use the following Fountain Theorem.

\begin{theorem}\label{223}(\cite[Theorem 3.6]{Willem}) 
Let $X$ be an infinite dimensional Banach space. Assume that $\varphi \in C^{1}(X,\mathbb{R})$, $\varphi (-u)=\varphi (u)$ for all $u\in X$. If, for every $k\in \mathbb{N}$, there exist $\rho_k>r_k>0$ such that
\begin{list}{}{}
\item[$(A_1)$]$\varphi$ satisfies the (PS)$_{c}$ condition for every $c>0$;
\item[$(A_2)$] $a_k:=\max\limits_{u\in Y_k,\|u\|=\rho_k}\varphi(u)\leq 0$.
\item[$(A_3)$] $b_k:=\inf\limits_{u\in Z_k,\|u\|=r_k}\varphi(u)\rightarrow +\infty$ as $k\rightarrow \infty$. 
               
\end{list}
Then $\varphi $ has a sequence of critical points $\{u_k\}$ such that $\varphi(u_k)\rightarrow +\infty$.
\end{theorem}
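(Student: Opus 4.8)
The plan is to prove this abstract statement exactly as Willem does, namely by a $\mathbb{Z}_2$-equivariant minimax argument built on the evenness of $\varphi$ and the Borsuk--Ulam theorem. For each $k$ I would work with the ball $B_k=\{u\in Y_k:\|u\|\le \rho_k\}$ inside the finite-dimensional space $Y_k$, its relative boundary $\partial B_k=\{u\in Y_k:\|u\|=\rho_k\}$, the sphere $N_k=\{u\in Z_k:\|u\|=r_k\}$, and the class of deformations
\[
\Gamma_k=\{\gamma\in C(B_k,X):\ \gamma\ \text{is odd and}\ \gamma|_{\partial B_k}=\mathrm{id}\}.
\]
I would then define the candidate critical value $c_k=\inf_{\gamma\in\Gamma_k}\max_{u\in B_k}\varphi(\gamma(u))$ and show that it is an actual critical value with $c_k\to+\infty$, which produces the desired unbounded sequence of critical points.

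First I would establish the \emph{intersection (linking) lemma}: every $\gamma\in\Gamma_k$ satisfies $\gamma(B_k)\cap N_k\neq\varnothing$. Since $\gamma$ is odd we have $\gamma(0)=0$, so $O=\{u\in B_k:\|\gamma(u)\|<r_k\}$ is a bounded symmetric open neighbourhood of $0$ in $Y_k$; because $\gamma=\mathrm{id}$ and $\rho_k>r_k$ on $\partial B_k$, this set stays away from $\partial B_k$, so $\partial O\subset\{u:\|\gamma(u)\|=r_k\}$ lies in the interior of $B_k$. Composing $\gamma$ with the continuous linear projection $Q:X\to Y_{k-1}$ along $Z_k$ yields an odd map, and since $\dim Y_k>\dim Y_{k-1}$ the Borsuk--Ulam theorem forces $Q\gamma$ to vanish at some $u_0\in\partial O$; then $\gamma(u_0)\in Z_k$ with $\|\gamma(u_0)\|=r_k$, i.e. $\gamma(u_0)\in N_k$. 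Using $(A_3)$ this gives $\max_{u\in B_k}\varphi(\gamma(u))\ge \varphi(\gamma(u_0))\ge b_k$ for every $\gamma$, hence $c_k\ge b_k$; and since $\mathrm{id}\in\Gamma_k$ and $B_k$ is compact, $c_k\le\max_{B_k}\varphi<\infty$. Therefore $b_k\le c_k<\infty$, and $(A_3)$ yields $c_k\to+\infty$.

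Next I would show each $c_k$ (for $k$ large, so that $b_k>0$) is a critical value. Arguing by contradiction, if $c_k$ were regular then, invoking the $(\mathrm{PS})_{c_k}$ condition of $(A_1)$, the \emph{equivariant quantitative deformation lemma} provides $\varepsilon\in(0,c_k)$ and an odd homeomorphism $\eta:X\to X$ with $\eta(\varphi^{c_k+\varepsilon})\subset\varphi^{c_k-\varepsilon}$ and $\eta=\mathrm{id}$ on the set where $\varphi\le c_k-2\varepsilon$. Choosing $\gamma\in\Gamma_k$ with $\max_{u\in B_k}\varphi(\gamma(u))\le c_k+\varepsilon$ and composing, $\eta\circ\gamma$ is again odd; moreover on $\partial B_k$ one has $\varphi(\gamma(u))=\varphi(u)\le a_k\le 0<c_k-2\varepsilon$ by $(A_2)$, so $\eta$ fixes $\gamma(\partial B_k)$ and $\eta\circ\gamma|_{\partial B_k}=\mathrm{id}$; thus $\eta\circ\gamma\in\Gamma_k$ while $\max_{u\in B_k}\varphi(\eta(\gamma(u)))\le c_k-\varepsilon$, contradicting the definition of $c_k$. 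Hence $c_k$ is a critical value, producing a critical point $u_k$ with $\varphi(u_k)=c_k$, and since $c_k\to+\infty$ the theorem follows.

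I expect the two genuinely topological inputs to be the main obstacles: the Borsuk--Ulam argument underlying the intersection lemma, where one must carefully match the dimensions of $Y_k$ and $Y_{k-1}$ and handle the projection $Q$, and the construction of an \emph{odd} deformation $\eta$, since the standard gradient-flow deformation must be symmetrized to respect $\varphi(-u)=\varphi(u)$; both are classical and may be quoted from the $\mathbb{Z}_2$-equivariant minimax machinery in \cite{Willem}.
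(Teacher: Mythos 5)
The paper does not actually prove this statement---it quotes it verbatim as Theorem 3.6 of Willem's \emph{Minimax Theorems} \cite{Willem}---and your reconstruction is precisely the argument of that cited source: the minimax values $c_k=\inf_{\gamma\in\Gamma_k}\max_{u\in B_k}\varphi(\gamma(u))$, the Borsuk--Ulam intersection lemma forcing $\gamma(B_k)\cap N_k\neq\varnothing$ and hence $c_k\ge b_k\to+\infty$, and the odd quantitative deformation (available since the even functional admits an odd pseudo-gradient) ruling out regularity of $c_k$ once $b_k>0$, using $(A_2)$ to keep $\eta\circ\gamma$ in $\Gamma_k$ via $\varphi|_{\gamma(\partial B_k)}\le a_k\le 0<c_k-2\varepsilon$. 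The supporting details you supply---openness, boundedness and symmetry of $O$ with $\partial O\subset\{\|\gamma(u)\|=r_k\}\cap\operatorname{int}B_k$, the projection $Q$ onto $Y_{k-1}$ along $Z_k$ with $\dim Y_k>\dim Y_{k-1}$, and finiteness of $c_k$ from $\mathrm{id}\in\Gamma_k$---are all correct, so the proposal is sound and takes the same route as the proof the paper points to.
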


Since $H \hookrightarrow L^{2}\left(R^{N}\right)$ is compact under assumptions $(V)$ and $L^{2}\left(R^{N}\right)$ is a separable Hilbert space, then $H$ possesses is a countable orthonormal basis $\{e_{j}\}_{j=1}^{\infty}$. Define 
\begin{equation*}
X_{j}=\mathbb{R}e_{j},\quad Y_{k}=\bigoplus_{j=1}^{k}X_{j},\quad Z_{k}=\overline{\bigoplus_{j=k+1}^{\infty }X_{j}},\quad k\in \mathbb{Z}.
\end{equation*}
Then, $H=\overline{\bigoplus_{j=1}^{\infty }X_{j}}$ and $Y_{k}$ is finite dimensional.

\begin{lemma}\label{3332}
	Assume that $(V)$, $(F_1)$ and $(g_1)$ hold, then there exist a sequence $\{\lambda_k\}\subset \R^+$ and $r_k>0$ such that 
	\begin{equation*}
	\lambda_k\rightarrow \infty\quad \text{and}\quad \inf\limits_{u\in Z_k,\|u\|=r_k}I_\lambda (u)\rightarrow +\infty\quad \text{as } k\rightarrow \infty
	\end{equation*}
	whenever $|\lambda|\leq \lambda_k$.
\end{lemma}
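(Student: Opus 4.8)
The plan is to verify hypothesis $(A_3)$ of the Fountain Theorem (\ref{223}) directly, by bounding $I_\lambda$ from below on the sphere $\{u\in Z_k:\|u\|=r_k\}$ and exploiting the decay of the embedding constants on the tail subspaces. For each $q\in[2,2_s^*)$ set
$$\beta_k(q):=\sup_{u\in Z_k,\,\|u\|=1}\|u\|_q .$$
Because $H\hookrightarrow L^q(\R^N)$ is compact for $2\le q<2_s^*$ under $(V)$ (see the discussion following \eqref{6}), the classical argument of \cite{Willem} gives $\beta_k(q)\to 0$ as $k\to\infty$; I will use this for the two exponents $q=2$ and $q=p$, where $p\in(2,2_s^*)$ comes from $(F_1)$.

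Next I would estimate the two nonlinear integrals from above on $Z_k$. From $(F_1)$, exactly as in \eqref{estm-1}, one has $|F(x,u)|\le\frac{c_1}{2}|u|^2+\frac{c_2}{p}|u|^p$, whence
$$\int_{\R^N}F(x,u)\,dx\le\frac{c_1}{2}\beta_k(2)^2\|u\|^2+\frac{c_2}{p}\beta_k(p)^p\|u\|^p .$$
Integrating $(g_1)$ gives $|G(x,u)|\le\xi_1(x)|u|^{\delta_1}+\xi_2(x)|u|^{\delta_2}$, so by H\"older's inequality and \eqref{6}, as in \eqref{g-estm-3},
$$\Big|\lambda\int_{\R^N}G(x,u)\,dx\Big|\le|\lambda|\big(D_1\|u\|^{\delta_1}+D_2\|u\|^{\delta_2}\big),\qquad D_i:=\eta_2^{\delta_i}\|\xi_i\|_{\theta_i}.$$
Choosing $k$ large enough that $c_1\beta_k(2)^2\le\frac12$, the two quadratic terms merge and I obtain, for $u\in Z_k$,
$$I_\lambda(u)\ge\frac14\|u\|^2-\frac{c_2}{p}\beta_k(p)^p\|u\|^p-|\lambda|\big(D_1\|u\|^{\delta_1}+D_2\|u\|^{\delta_2}\big).$$

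Then I would choose the radii to balance the leading powers. Set
$$r_k:=\Big(\frac{p}{8c_2\,\beta_k(p)^p}\Big)^{\frac{1}{p-2}} ;$$
since $p>2$ and $\beta_k(p)\to0$, this forces $r_k\to\infty$, and on $\|u\|=r_k$ the superlinear term is absorbed, $\frac{c_2}{p}\beta_k(p)^p r_k^p=\frac18 r_k^2$, leaving
$$\inf_{u\in Z_k,\,\|u\|=r_k}I_\lambda(u)\ge\frac18 r_k^2-|\lambda|\big(D_1 r_k^{\delta_1}+D_2 r_k^{\delta_2}\big).$$
Finally I would absorb the sublinear term into the parameter. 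Because $\delta_1,\delta_2<2$ and $r_k\to\infty$, the quantity
$$\lambda_k:=\frac{r_k^2}{16\,(D_1 r_k^{\delta_1}+D_2 r_k^{\delta_2})}$$
satisfies $\lambda_k\to\infty$, and for every $\lambda$ with $|\lambda|\le\lambda_k$ one gets $I_\lambda(u)\ge\frac18 r_k^2-\frac1{16}r_k^2=\frac1{16}r_k^2\to\infty$ on $\|u\|=r_k$, which is precisely the assertion.

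The two integral estimates are routine, merely repeating computations already performed in the proof of Lemma \ref{3331}. The genuinely delicate point is the coupling between the index $k$ and the threshold $\lambda_k$: the argument rests entirely on the strict inequalities $\delta_1,\delta_2<2<p$, which simultaneously force $r_k\to\infty$ (the $|u|^p$ term dominates the quadratic one through the decay of $\beta_k(p)$) and $\lambda_k\to\infty$ (the quadratic term dominates the sublinear one as $r_k\to\infty$). The order of the construction matters: $r_k$ must be fixed first, using only the $F$- and quadratic contributions, and $\lambda_k$ defined afterwards in terms of $r_k$, otherwise the dependence becomes circular. Note that $(g_2)$ and $(F_4)$ play no role here---they only furnish the evenness of $I_\lambda$ needed elsewhere in the Fountain Theorem---so the conclusion rests solely on $(V)$, $(F_1)$ and $(g_1)$, in agreement with the stated hypotheses.
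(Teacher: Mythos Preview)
Your proof is correct and follows essentially the same route as the paper: the same decay quantities $\beta_k(q)$, the same absorption of the $\|u\|_2^2$ term for large $k$, the same choice $r_k=(p/8c_2\beta_k(p)^p)^{1/(p-2)}$, and the same final lower bound $\tfrac{1}{16}r_k^2$. The only cosmetic difference is that the paper first uses $\|u\|^{\delta_1}\le\|u\|^{\delta_2}$ for $\|u\|\ge1$ to collapse the sublinear contribution to a single term and then sets $\lambda_k=r_k^{2-\delta_2}/(16K)$, whereas you keep both terms and define $\lambda_k=r_k^2/\big(16(D_1 r_k^{\delta_1}+D_2 r_k^{\delta_2})\big)$; both choices yield $\lambda_k\to\infty$ and the same conclusion.
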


\begin{proof}
	Similar to Lemma 3.8 in \cite{Willem}, for any $2\leq p< 2_s^*$, we have 
	\begin{equation}\label{25}
	\beta _{k}(p):=\sup\limits_{u\in Z_{k},\Vert u\Vert=1}\Vert u\Vert _{p}\rightarrow 0,
	\end{equation}
	as $k\rightarrow \infty$. 
	
	By \eqref{7}, \eqref{estm-1}, \eqref{g-estm-3} and (\ref{25}) we obtain
	\begin{equation*}
	\begin{split}
	I_\lambda(u)& =\frac{1}{2}\|u\|^{2}-\int_{\mathbb{R}^{N}}F(x,u) dx-\lambda\int_{\R^N} G(x,u)\\
	& \geq  \frac{1}{2}\| u\|^{2}-\frac{c_1}{2} \|u\|_2^2-\frac{c_2}{p} \|u\|_p^{p}-\lambda \left(\|\xi_1\|_{\theta_1}\eta_2^{\delta_1}\|u\|^{\delta_1}+ \|\xi_2\|_{\theta_2}\eta_2^{\delta_2}\|u\|^{\delta_2}\right)\\
	&\geq \frac{1}{2}\| u\|^{2}-\frac{c_1}{2} \beta_k^2(2)\|u\|^2-\frac{c_2}{p} \beta_k^p(p)\|u\|^{p}-|\lambda|\left(\|\xi_1\|_{\theta_1}\eta_2^{\delta_1}\|u\|^{\delta_1}+ \|\xi_2\|_{\theta_2}\eta_2^{\delta_2}\|u\|^{\delta_2}\right).
	\end{split}
	\end{equation*}
	According to \eqref{25}, we can choose a large $k_0>1$ so that
	\begin{equation*}
\beta_k^2(2)	\leq \frac{1}{2c_1},\quad \forall k\geq k_0.
	\end{equation*}   
This provides
	\begin{equation*}
I_\lambda(u)\geq \frac{1}{4}\| u\|^{2}-\frac{c_2}{p} \beta_k^p(p)\|u\|^{p}-|\lambda|\left(\|\xi_1\|_{\theta_1}\eta_2^{\delta_1}\|u\|^{\delta_1}+ \|\xi_2\|_{\theta_2}\eta_2^{\delta_2}\|u\|^{\delta_2}\right).
	\end{equation*}
For any $u\in Z_{k}$ satisfying $\|u\|\geq 1$, we have
\begin{equation*}
\|\xi_1\|_{\theta_1}\eta_2^{\delta_1}\|u\|^{\delta_1}+ \|\xi_2\|_{\theta_2}\eta_2^{\delta_2}\|u\|^{\delta_2}\leq \left( \|\xi_1\|_{\theta_1}\eta_2^{\delta_1}+ \|\xi_2\|_{\theta_2}\eta_2^{\delta_2}\right)\|u\|^{\delta_2},
\end{equation*}	
since $1<\delta_1<\delta_2<2$. Hence, we obtain
\begin{equation}\label{I-k-estm}
\begin{split}
I_\lambda(u) &\geq \frac{1}{4}\| u\|^{2}-\frac{c_2}{p} \beta_k^p(p)\|u\|^{p}-|\lambda|\left(\|\xi_1\|_{\theta_1}\eta_2^{\delta_1}\|u\|^{\delta_1}+ \|\xi_2\|_{\theta_2}\eta_2^{\delta_2}\|u\|^{\delta_2}\right)\\
&\geq \frac{1}{4}\| u\|^{2}-\frac{c_2}{p} \beta_k^p(p)\|u\|^{p}-|\lambda|\left(\|\xi_1\|_{\theta_1}\eta_2^{\delta_1}+ \|\xi_2\|_{\theta_2}\eta_2^{\delta_2}\right)\|u\|^{\delta_2}\\
&= \|u\|^{\delta_2} \left[\frac{1}{4}\| u\|^{2-\delta_2}-\frac{c_2}{p} \beta_k^p(p)\|u\|^{p-\delta_2}-|\lambda|K\right],
\end{split}
\end{equation}
where $K=\|\xi_1\|_{\theta_1}\eta_2^{\delta_1}+ \|\xi_2\|_{\theta_2}\eta_2^{\delta_2}$. 
For each $k\in \N$ sufficiently large, taking
\begin{equation*}
r_k:=\left(\frac{p}{8c_2 \beta _{k}^{p}(p)}\right)^{1/(p-2)}.
\end{equation*}
Then, by virtue of \eqref{25} we obtain 
\begin{equation*}
r_k\rightarrow +\infty\quad \text{as}\quad k\rightarrow \infty.
\end{equation*}
Then, there exists $k_1>1$ such that $r_k\geq 1$ when $k\geq k_1$. By \eqref{I-k-estm}, for $u\in Z_k$, $\|u\|=r_k$, we have
\begin{equation}\label{I-k-estm2}
\begin{split}
I_\lambda(u) & \geq r_k^{\delta_2}\left( \frac{1}{4} r_k^{2-\delta_2}-\frac{c_2}{p} \beta_k^p(p)r_k^{p-\delta_2}-|\lambda|K \right)\\
&=r_k^{\delta_2}\left( \frac{1}{4} r_k^{2-\delta_2}-\frac{c_2}{p} \beta_k^p(p)r_k^{p-\delta_2}-|\lambda|K \right)\\
&=r_k^{\delta_2}\left( \frac{1}{8} r_k^{2-\delta_2}-|\lambda|K \right).
\end{split}
\end{equation}
Choosing $\displaystyle \lambda_k=\frac{r_k^{2-\delta_2}}{16 K}$, then, $\lambda_k \rightarrow \infty$ as $k\rightarrow \infty$ and for any $\lambda \in \R$ satisfying $|\lambda|\leq \lambda_k$ we get from \eqref{I-k-estm2}
\begin{equation*}
I_\lambda(u) \geq \frac{r_k^2}{16}, \quad u\in Z_k,\: \|u\|=r_k .
\end{equation*}	
Hence, for $k\geq \max\{k_0,k_1\}$ we deduce
\begin{equation*}
\inf\limits_{u\in Z_k,\|u\|=r_k}I_\lambda(u)\geq \frac{r_k^{2}}{16} \rightarrow +\infty\quad \text{as}\quad k\rightarrow \infty
\end{equation*}
whenever $|\lambda|\leq \lambda_k$. This completes the proof.
\end{proof} 

\begin{lemma}\label{3333} 
	For any finite dimensional subspace $Y_k \subset H$, there holds
	\begin{equation*}
	\max\limits_{u\in Y_k,\|u\|=\rho_k}I_\lambda(u)\leq 0.
	\end{equation*} 
\end{lemma}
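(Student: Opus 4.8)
The plan is to prove the stronger statement that $I_\lambda(u)\to-\infty$ as $\|u\|\to\infty$ along $u\in Y_k$. Since $Y_k$ is finite dimensional, the sphere $\{u\in Y_k:\|u\|=\rho_k\}$ is compact, so the maximum in the statement is attained; once the divergence is established, it suffices to take $\rho_k$ large, and in particular we may arrange $\rho_k>r_k$ to match the hypotheses of the Fountain theorem \eqref{223}. I would argue by contradiction: suppose there is a sequence $\{u_n\}\subset Y_k$ with $\|u_n\|\to\infty$ and $I_\lambda(u_n)\ge 0$ for all $n$, and set $v_n:=u_n/\|u_n\|$, so that $\|v_n\|=1$.

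Because $Y_k$ is finite dimensional, all norms on it are equivalent, so up to a subsequence $v_n\to v$ in $H$ with $\|v\|=1$; in particular $v\ne 0$, $v_n\to v$ in $L^2(\R^N)$, and (passing to a further subsequence) $v_n\to v$ a.e. in $\R^N$. Let $A:=\{x\in\R^N:v(x)\ne 0\}$, so $\meas(A)>0$. For $x\in A$ one has $|u_n(x)|=\|u_n\|\,|v_n(x)|\to\infty$, whence by $(F_2)$ the ratio $F(x,u_n(x))/u_n(x)^2\to+\infty$.

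The superlinear term drives the contradiction; the only care needed is a uniform lower bound that licenses Fatou's lemma, which is exactly where the sign part of $(F_2)$ enters. Splitting $\R^N$ into $\Omega_n=\{|u_n|\le r_1\}$ and its complement as in \eqref{ctrd-inqlt}, on $\R^N\setminus\Omega_n$ one has $F(x,u_n)\ge 0$, while on $\Omega_n$ the elementary bound \eqref{estm-1} gives $F(x,u_n)/u_n^2\ge -C$ with $C:=\tfrac{c_1}{2}+\tfrac{c_2}{p}r_1^{p-2}$. Hence $\big(\tfrac{F(x,u_n)}{u_n^2}+C\big)v_n^2\ge 0$ everywhere, and Fatou's lemma yields
\[
\liminf_{n\to\infty}\int_{\R^N}\Big(\frac{F(x,u_n)}{u_n^2}+C\Big)v_n^2\,dx\ \ge\ \int_{A}\liminf_{n\to\infty}\Big(\frac{F(x,u_n)}{u_n^2}+C\Big)v_n^2\,dx=+\infty.
\]
Since $\int_{\R^N}C v_n^2\,dx\to C\|v\|_2^2<\infty$, it follows that $\int_{\R^N}\frac{F(x,u_n)}{u_n^2}v_n^2\,dx\to+\infty$, i.e. $\int_{\R^N}F(x,u_n)\,dx/\|u_n\|^2\to+\infty$.

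Finally, the sublinear contribution is negligible: by $(g_1)$, Hölder's inequality and \eqref{6}, exactly as in \eqref{g-estm-3}–\eqref{G-estm}, one has $\big|\int_{\R^N}G(x,u_n)\,dx\big|\le\|\xi_1\|_{\theta_1}\eta_2^{\delta_1}\|u_n\|^{\delta_1}+\|\xi_2\|_{\theta_2}\eta_2^{\delta_2}\|u_n\|^{\delta_2}$, so that $|\lambda|\,\big|\int_{\R^N}G(x,u_n)\,dx\big|/\|u_n\|^2\to 0$ because $\delta_1,\delta_2<2$. Dividing \eqref{7} by $\|u_n\|^2$ and letting $n\to\infty$ then forces $I_\lambda(u_n)/\|u_n\|^2\to-\infty$, contradicting $I_\lambda(u_n)\ge 0$. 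Therefore $I_\lambda(u)\to-\infty$ on $Y_k$, and choosing $\rho_k>r_k$ large enough gives $\max_{u\in Y_k,\,\|u\|=\rho_k}I_\lambda(u)\le 0$, as claimed. The essential obstacle is the Fatou step: without the sign condition in $(F_2)$ controlling the region $\{|u_n|\le r_1\}$, the integrand would not admit a uniform lower bound, and the superquadratic divergence could not be extracted.
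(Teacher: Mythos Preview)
Your argument is correct and follows essentially the same route as the paper: a contradiction via normalization $v_n=u_n/\|u_n\|$, finite-dimensionality of $Y_k$ to ensure a nonzero limit $v$, Fatou's lemma on the superlinear term over $\{v\ne 0\}$, and the estimate \eqref{G-estm} to kill the sublinear contribution. Your Fatou step is in fact more carefully justified than the paper's, since you explicitly produce the uniform lower bound $F(x,u_n)/u_n^2\ge -C$ from the sign condition in $(F_2)$ and \eqref{estm-1}, whereas the paper applies Fatou directly on $E=\{v\ne 0\}$ without spelling this out.
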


\begin{proof}
	Let $Y_k$ be any finite dimensional subspace of $H$, we claim that there exists a constant $R_k=R(Y_k)>0$ such that $I_\lambda(u) \leq 0$ $\|u\|\geq R_k$. Otherwise, there is a sequence $\{u_n\}\subset Y_k$ such that 
	\begin{equation}\label{26}
	\|u_n\|\rightarrow \infty\quad  \text{ and } \quad  I_\lambda(u_n) \geq 0.
	\end{equation}
	Set $v_n = \frac{u_n}{\|u_n\|}$, then $\|v_n\|=1$. Therefore, by the Sobolev embedding theorem, up to a subsequence, we can assume $v_n \rightharpoonup v$ in $H$, 
	$v_n \rightarrow v$ in $L^p(\R^N)$ ($2\leq p<2_s^*$) and $v_n \rightarrow v$ a.e. in $\R^N$. Set $E=\{x\in \R^N \: :\: v(x)\neq 0\}$. Since on the finite dimensional subspace $Y_k$ all norms are equivalent, there exists a constant $\alpha_k>0$ such that
	\begin{equation*}
	\|u\|_p \geq \alpha_k\|u\|,\quad \forall u\in Y_k,
	\end{equation*}
	and then
	\begin{equation*}
	\int_{\R^N}\left|u_{n}\right|^{p} dx \geq \alpha_k^{p}\left\|u_{n}\right\|^{p}, \quad \forall n \in \mathbb{N},
	\end{equation*}
	which yields
	\begin{equation*}
	\alpha_k^p\leq \lim_{n\rightarrow \infty}\int_{\mathbb{R}^{N}}\left|v_{n}\right|^{p} dx=\|v_n\|_p^p.
	\end{equation*}
	Hence $\meas(E)>0$and then $|u_n(x)|\rightarrow \infty$ for all $x \in E$. 
	Using \eqref{7} and \eqref{26} we obtain
	\begin{equation*}
	\frac{1}{2}\|u_n\|^2\geq \int_{\R^N}F(x,u_n)dx+\lambda \int_{\R^N}G(x,u_n)dx,\quad \forall x\in \R^N.
	\end{equation*}
	Therefore,
	\begin{equation*}
	\frac{1}{2} \geq \int_{\R^N} \frac{F(x,u_n)}{\|u_n\|^2} dx+\lambda \int_{\R^N} \frac{G(x,u_n)}{\|u_n\|^2} dx.
	\end{equation*}
	Then, by \eqref{G-estm} and Fatou's Lemma we deduce
	\begin{equation*}
	\frac{1}{2} \geq \liminf_{n\rightarrow \infty} \int_{\R^N} \frac{F(x,u_n)}{|u_n|^2}|v_n|^2 dx \geq \liminf_{n\rightarrow \infty} \int_{E} \frac{F(x,u_n)}{|u_n|^2}|v_n|^2 dx\geq \int_{E}\liminf_{n\rightarrow \infty} \frac{F(x,u_n)}{|u_n|^2}|v_n|^2 dx=+\infty.
	\end{equation*}  
	We have a contradiction. This shows that there exists a constant $R_k=R(Y_k)>0$ such that $I(u) \leq 0$ for all $u\in Y_k\setminus B_{R_k}(0)$. Hence, choosing $\rho_k>\max\{R_k,r_k\}$, we conclude that 
	\begin{equation*}
	\max\limits_{u\in Y_k,\|u\|=\rho_k} I_\lambda(u) \leq 0.
	\end{equation*} 
\end{proof}

\begin{proof}[\textbf{Proof of Theorem 2.1}]
We have $I_\lambda\in C^1(H,\R)$ is even in view of $(F_4)$ and $(g_2)$. On the other hand, by Lemma \ref{3331} and Lemma \ref{3333}, the functional $I_\lambda$ satisfies the conditions $(A_1)-(A_2)$ of the Fountain Theorem \ref{223}, respectively. Moreover, condition $(A_3)$ is satisfied whenever $|\lambda|\leq \lambda_k=\frac{r_k^{2-\delta_2}}{16 K}$ due to Lemma \ref{3332}. Thus, the functional $I_\lambda$ has a sequence of critical points $\{u_k\}\subset H$ such that $I_\lambda (u_k) \rightarrow \infty$ as $k \rightarrow \infty$, whenever $|\lambda|\leq \lambda_k$, that is, equation \eqref{1} possesses infinitely many solutions.
\end{proof}

%\section*{Acknowledgments}
%This work is supported by...


\begin{thebibliography}{9}
	


\bibitem{finance} D. Applebaum, L\'{e}vy processes-from probability to finance and quantum groups, Notices Amer. Math. Soc., \textbf{51} (2004), 1336-1347.

\bibitem{Bartsch} T. Bartsch, Z. Q. Wang, M. Willem, Chapter 1-The Dirichlet problem for superlinear elliptic equations, Handbook of Differential Equations Stationary Partial Differential Equation, \textbf{2} (2005), 1-55.


\bibitem{secchi1} B. Bieganowski, S. Secchi, Non-local to local transition for ground states of fractional Schr\"{o}dinger equations on bounded domains, J. Fixed Point Theory Appl., \textbf{22}, 76 (2020). https://doi.org/10.1007/s11784-020-00812-6.

\bibitem{plasma} G. M. Canneori, D. Mugnai, On fractional plasma problems, Nonlinearity, \textbf{31} (2018), 3251-3283.

\bibitem{19} C. Chen, Infinitely many solutions for fractional Schr\"{o}dinger equations in $\R^N$, Electron. J. Differential Equations, \textbf{88} (2016), 1-15.

%\bibitem{rong} R. Cheng, Y. Wu, Remarks on infinitely many solutions for a class of Schr\"{o}dinger equations with sublinear nonlinearity, Math. Meth. Appl. Sci., \textbf{43} (2020), 8527–8537. 


\bibitem{29} E. Di Nezza, G. Palatucci, E. Valdinoci, Hitchhikers guide to the fractional Sobolev spaces, Bull. Sci. Math., \textbf{136}(5) (2012), 521-573.


\bibitem{18} M. Du, L. Tian, Infinitely many solutions of the nonlinear fractional Schr\"{o}dinger equation, Discrete Contin. Dyn. Syst., \textbf{21}(10) (2016), 3407-3428.	


\bibitem{felmer2} P. Felmer, C. Torres, Radial symmetry of ground states for a regional fractional nonlinear Schr\"odinger equation, Commun.
Pure Appl. Anal., \textbf{13}(6) (2014), 2395–2406.

\bibitem{17} B. Ge, Multiple solutions of nonlinear Schr\"{o}dinger equation with fractional  Laplacian, Nonlinear Anal. (Real World Applications), \textbf{30} (2016), 236-247.

\bibitem{image} G. Gilboa, S. Osher, Nonlocal operators with applications to image processing, Multiscale Model. Simul., \textbf{7}(3) (2008), 1005-1028.

\bibitem{hou} G.L. Hou, B. Ge, J.F. Lu, Infinitely many solutions for sublinear fractional Schr\"odinger-type equation with general potential, Electron. J. Differential Equations, \textbf{97} (2018), 1-13.

\bibitem{khoutir1} S. Khoutir, H. Chen, Existence of infinitely many high energy solutions for a fractional Schr\"{o}dinger equation in $\R^N$, Appl. Math. Lett., \textbf{61} (2016), 156-162.

\bibitem{khoutir2} S. Khoutir, Multiplicity results for a fractional Schr\"odinger equation with potentials, Rocky Mountain J. Math., \textbf{49}(7) (2019), 2205-2226.

\bibitem{Laskin1} N. Laskin, Fractional quantum mechanics and L\'{e}vy path integrals, Phys. Lett. A, \textbf{268}(4) (2000), 298-305.

\bibitem{Laskin2} N. Laskin, Fractional Schr\"{o}dinger equation, Phys. Rev. E, \textbf{66}(5) (2002), 056108.

\bibitem{Laskin3} N. Laskin, Principles of fractional quantum mechanics, Fractional Dynamics, World Sci. Publ., Hackensack, NJ, (2011), 393-427.

\bibitem{24} P. Li, Y. Shang, Infinitely many solutions for fractional Schr\"{o}dinger equations with perturbation via variational methods, Open Math., \textbf{15} (2017), 578-586.

\bibitem{zupei} Z. Shen, Z. Han, Q. Zhang, Ground states of nonlinear Schrödinger equations with fractional Laplacians, Discrete \& Continuous Dynamical Systems-S, \textbf{12}(7) (2019), 2115-2125. 

\bibitem{teng} K. Teng, Multiple solutions for a class of fractional Schr\"odinger equations in $\R^N$, Nonlinear Anal. Real World
Applications, \textbf{21} (2015), 76-86.

\bibitem{mohsen} M. Timoumi,  Infinitely many solutions for fractional Schr\"{o}dinger equations with superquadratic conditions or combined nonlinearities, J. Korean Math. Soc., \textbf{57}(4) (2020), 825-844.

\bibitem{20} Z. Wang, H.S. Zhou, Radial sign-changing solution for fractional Schr\"{o}dinger equation, Discrete Contin. Dyn. Syst., \textbf{36}(1) (2016), 499-508.

\bibitem{26} Q. Wang, Multiple positive solutions of fractional elliptic equations involving concave and convexe nonlinearities in $\R^N$, Commun. Pure Appl. Anal., \textbf{15}(5) (2016), 1671-1688.

\bibitem{Willem} M. Willem, Minimax Theorems, Birkh\"{a}user Boston Inc, Boston, 1996.

\bibitem{zou} W. Zou, Variant fountain theorems and their applications, Manuscripta Math., \textbf{104} (2001), 343–358.

\end{thebibliography}
\end{document}